\DeclareMathOperator{\Irr}{Irr}
\renewcommand{\phi}[0]{\varphi}
\renewcommand{\theta}[0]{\vartheta}
\renewcommand{\epsilon}[0]{\varepsilon}
\newcommand{\Z}{\text{$\mathbf{Z}$}}
\newcommand{\Pro}{\text{$\mathbf{P}^1$}}
\newcommand{\F}{\text{$\mathbf{F}$}}
\newcommand{\Mod}[1]{\  (\text{mod}\ #1)}
\newtheorem{theorem}{Theorem}[section]
\newtheorem{lemma}[theorem]{Lemma}
\theoremstyle{definition}
\newtheorem{definition}[theorem]{Definition}
\newtheorem{example}[theorem]{Example}
\theoremstyle{remark}
\newtheorem{remark}[theorem]{Remark}
\numberwithin{equation}{section}
\begin{document}

\bibliographystyle{amsplain}

\date{}

\title[]
{Sequences of irreducible polynomials over odd prime fields via elliptic curve endomorphisms}

\author{S.~Ugolini}
\email{sugolini@gmail.com} 

\begin{abstract}
In this paper we present and analyse a construction of irreducible polynomials over odd prime fields via the transforms which take any polynomial $f \in \F_p[x]$ of positive degree $n$ to $\left( \frac{x}{k} \right)^n \cdot f(k(x+x^{-1}))$, for some specific values of the odd prime $p$ and $k \in \F_p$.  
\end{abstract}

\maketitle

\section{Introduction}
Let $f$ be a polynomial of positive degree $n$ defined over the field $\F_p$ with $p$ elements, for some odd prime $p$. We set $q=p^n$ and denote by $\F_q$ the finite field with $q$ element. 

For a chosen $k \in \F_p^*$ we define the $Q_k$-transform of $f$ as
\begin{equation*}
f^{Q_k} (x) = \left( \frac{x}{k} \right)^n \cdot f(\theta_{k}(x)),
\end{equation*} 
where $\theta_{k}$ is the map which takes any element $x \in \Pro (\F_q)  = \F_q \cup \{ \infty \}$ to
\begin{displaymath}
\theta_{k} (x) = 
\begin{cases}
\infty & \text{if $x= 0$ or $\infty$,}\\
k \cdot (x+x^{-1}) & \text{otherwise}.
\end{cases}
\end{displaymath}

The aforementioned $Q_k$-transforms seem a natural generalization of some specific transforms employed by different authors for the synthesis of irreducible polynomials over finite fields. In \cite{mey} Meyn used the so-called $Q$-transform, which coincides with the $Q_1$-transform according to the notations of the present paper. Moreover, setting $k = \frac{1}{2}$ we recover the $R$-transform introduced by Cohen \cite{coh} and used more recently by us \cite{SUwpc} to construct sequences of irreducible polynomials over odd prime fields. 

In this paper we would like to take advantage of the knowledge of the dynamics of the maps $\theta_k$ for some specific values of $k$ \cite{SUk} and extend our investigation \cite{SUwpc}. In the following we will give a thorough description of the sequences of irreducible polynomials constructed by repeated applications of a $Q_k$-transform, when $k$ belongs to one of the following sets:  

\begin{itemize}
\item $C_1 = \left\{\frac{1}{2}, - \frac{1}{2} \right\}$;
\item $C_2 = \left\{ \text{$k \in \F_p: k$ is a root of $x^2 + \frac{1}{4}$} \right\}$, provided that $p \equiv 1 \pmod{4}$;
\item $C_3 = \left\{ \text{$k \in \F_p: k$  is a root of $x^2 + \frac{1}{2} x + \frac{1}{2}$} \right\}$, provided that $p \equiv 1, 2$, or $4 \pmod{7}$;
\item $C_3^- = \left\{ \text{$k \in \F_p: -k$ is a root of $x^2 + \frac{1}{2} x + \frac{1}{2}$} \right\}$, provided that $p \equiv 1, 2$, or $4 \pmod{7}$. 
\end{itemize}

Indeed, the case $k = \frac{1}{2}$ has been analysed in \cite{SUwpc} and we can easily adapt the results of that paper to the case $k = - \frac{1}{2}$ (see the subsequent Remark \ref{-1/2}). Hence, in this paper we will mainly concentrate on the cases that $k \in  C_2 \cup C_3 \cup C_3^-$. 

\section{Preliminaries}
Let $p$ be an odd prime and $q$ a power of $p$. For a fixed $k \in \F_p^*$, the dynamics of the map $\theta_k$ over $\Pro(\F_q)$ can be visualized by means of the graph $G^q_{\theta_{k}}$, whose vertices are labelled by the elements of $\Pro(\F_q)$ and where a vertex $\alpha$ is joined to a vertex $\beta$ if $\beta = \theta_k (\alpha)$. As in \cite{SUk} we say that an element $x \in \Pro (\F_q)$ is $\theta_{k}$-periodic if $\theta_{k}^r (x) = x$ for some positive integer $r$. We will call the smallest of such integers $r$ the period of $x$ with respect to the map $\theta_{k}$. Nonetheless, if an element $x \in \Pro (\F_q)$ is not $\theta_{k}$-periodic, then it is preperiodic, namely $\theta_{k}^l (x)$ is $\theta_{k}$-periodic for some positive integer $l$. 

In \cite{SUk} the  reader can find more details about the length and the number of the cycles of $G_{\theta_{k}}^q$, when $k \in C_1 \cup C_2 \cup C_3$. For the purposes of the present paper we are just interested in the structure of the reversed binary trees attached to the vertices of a cycle.

The following lemma shows how the maps $\theta_k$ and $\theta_{-k}$ are related, for any $k \in \F_p^*$.
\begin{lemma}\label{k-k}
Let $k \in \F_p^*$ and $x \in \Pro (\F_q)$. The following hold:
\begin{enumerate}
\item $\theta_k^{2r} (x) = \theta_{-k}^{2r} (x)$ for any nonnegative integer $r$;
\item if $\theta_k^t(x)$ is $\theta_k$-periodic, for some nonnegative integer $t$,  then $\theta_{-k}^t (x)$ is $\theta_{-k}$-periodic too.
\end{enumerate}
\end{lemma}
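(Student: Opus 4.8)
The plan is to deduce both statements from a single identity comparing the iterates of $\theta_k$ and $\theta_{-k}$, namely
\[
\theta_{-k}^t (x) = (-1)^t \cdot \theta_k^t (x) \qquad \text{for every nonnegative integer $t$ and every $x \in \Pro (\F_q)$},
\]
with the convention $(-1)^t \cdot \infty = \infty$. Granting this, part (1) is the case $t = 2r$, while part (2) follows from a brief parity remark.

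To prove the identity I would first record two elementary facts, valid for all $x \in \Pro (\F_q)$ and obtained by inspecting the definition of $\theta_k$, treating the cases $x \in \{0, \infty\}$ separately and using $-\infty = \infty$:
\[
\theta_{-k}(x) = -\theta_k(x) \qquad \text{and} \qquad \theta_k(-x) = -\theta_k(x).
\]
The first holds because $-k \cdot (x + x^{-1}) = -\bigl(k \cdot (x + x^{-1})\bigr)$, the second because $(-x) + (-x)^{-1} = -(x + x^{-1})$; in the two exceptional cases both sides are $\infty$.

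Now argue by induction on $t$, the case $t = 0$ being trivial. For the inductive step, set $y = \theta_{-k}^t(x)$, so that $y = (-1)^t \theta_k^t(x)$ by the inductive hypothesis; then, using the first base identity followed by the second,
\[
\theta_{-k}^{t+1}(x) = \theta_{-k}(y) = -\theta_k\bigl((-1)^t \theta_k^t(x)\bigr) = -(-1)^t \theta_k^{t+1}(x) = (-1)^{t+1} \theta_k^{t+1}(x),
\]
where the second identity is exactly what lets the factor $(-1)^t$ pass through $\theta_k$. Taking $t = 2r$ gives $\theta_{-k}^{2r}(x) = \theta_k^{2r}(x)$, which is part (1).

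For part (2), suppose $\theta_k^t(x)$ is $\theta_k$-periodic and let $s$ be its period; then $\theta_k^{t + 2s}(x) = \theta_k^t(x)$ since $2s$ is a multiple of $s$. Applying the identity at the exponents $t$ and $t + 2s$ and using $(-1)^{t+2s} = (-1)^t$,
\[
\theta_{-k}^{t}(x) = (-1)^t \theta_k^t(x) = (-1)^t \theta_k^{t+2s}(x) = (-1)^{t+2s} \theta_k^{t+2s}(x) = \theta_{-k}^{t+2s}(x),
\]
so $\theta_{-k}^{2s}\bigl(\theta_{-k}^t(x)\bigr) = \theta_{-k}^t(x)$ and $\theta_{-k}^t(x)$ is $\theta_{-k}$-periodic. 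I expect no real obstacle: the only points needing care are the handling of $0$ and $\infty$ when verifying the two base identities, and the deliberate use of the \emph{even} multiple $2s$ of the period in part (2), which is what makes the sign factor collapse no matter the parities of $t$ and $s$.
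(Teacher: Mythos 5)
Your proof is correct, and it takes a genuinely (if mildly) different route from the paper's. The paper proves part (1) by induction on $r$, writing out the second iterate explicitly and observing that the sign of $k$ cancels in the two-fold composition $\theta_k^2$; it then gets (2) by applying (1) to $\tilde{x} = \theta_k^t(x)$, whose $\theta_k$-period $r$ gives $\theta_{-k}^{2r}(\tilde{x}) = \theta_k^{2r}(\tilde{x}) = \tilde{x}$. You instead prove the finer sign-tracking identity $\theta_{-k}^t(x) = (-1)^t \theta_k^t(x)$ for every $t$, derived from the two symmetries $\theta_{-k} = -\theta_k$ and $\theta_k(-x) = -\theta_k(x)$ (checked also at $0$ and $\infty$); part (1) is then the even case, and part (2) follows by going around the cycle an even number $2s$ of times so the sign collapses. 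What your refinement buys is a cleaner handling of odd $t$ in (2): the paper's argument literally shows that $\theta_k^t(x)$ is $\theta_{-k}$-periodic and tacitly identifies it with $\theta_{-k}^t(x)$, which is immediate from (1) only for even $t$, whereas for odd $t$ one has $\theta_{-k}^t(x) = -\theta_k^t(x)$ and precisely an oddness remark like yours is needed to close the gap; your version proves the claim as stated for all $t$ uniformly, and the identity $\theta_{-k}^t = (-1)^t\theta_k^t$ is a strictly stronger statement that could be reused elsewhere (e.g.\ in Section \ref{case_3-}). What the paper's route buys is brevity: one two-line computation of $\theta_k^2(\tilde{x})$ and no bookkeeping of signs or of the conventions $-0 = 0$, $-\infty = \infty$.
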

\begin{proof}
We prove separately the statements.
\begin{enumerate}[(1)]
\item We proceed by induction on $r$.

If $r=0$, then $\theta_{k}^0 (x) = \theta_{-k}^0 (x) = x$.

For the inductive step, assume that $\theta_{k}^{2(r-1)} (x) = \theta_{-k}^{2(r-1)} (x) = \tilde{x}$ for some integer $r > 0$. Therefore,
\begin{eqnarray*}
\theta_{k}^{2r} (x) & = & \theta_{k}^2 (\theta_k^{2r-2} (x)) = \theta_{k}^2 (\tilde{x}) =  k \cdot \frac{\left( k \frac{\tilde{x}^2+1}{\tilde{x}} \right)^2+1}{k \cdot \frac{\tilde{x}^2+1}{\tilde{x}}} \\
& = & -k \cdot \frac{\left( -k \frac{\tilde{x}^2+1}{\tilde{x}} \right)^2+1}{-k \cdot \frac{\tilde{x}^2+1}{\tilde{x}}} = \theta_{-k}^2 (\tilde{x}) = \theta_{-k}^{2r} (x).
\end{eqnarray*}

\item Let $\tilde{x} = \theta_k^t (x)$. By hypothesis, $\theta_k^r (\tilde{x}) = \tilde{x}$ for some nonnegative integer $r$. Then, $ \theta_{-k}^{2r} (\tilde{x}) = \theta_{k}^{2r} (\tilde{x}) = \tilde{x}$ according to (1).
\end{enumerate}
\end{proof}

\begin{remark}\label{-1/2}
In virtue of Lemma \ref{k-k}, the results in \cite[Theorem 3.1]{SUwpc} still hold replacing $\theta_{\frac{1}{2}}$ with $\theta_{-\frac{1}{2}}$ and the $R$-transform with the $Q_{-\frac{1}{2}}$-transform. 
\end{remark}

We introduce the following notations in analogy with \cite{SUwpc}.

\begin{definition}
If $f \in \F_p [x] \backslash \{ x \}$ is a monic irreducible polynomial and $\alpha$ is a non-zero root of $f$ in an appropriate extension of $\F_p$, then we denote by $\tilde{f}_{\theta_k}$ the minimal polynomial of $\theta_k (\alpha)$ over $\F_p$.
\end{definition}

\begin{definition}
We denote by $\Irr_p$ the set of all monic irreducible polynomials of $\F_p [x]$. If $n$ is a positive integer, then $\Irr_p(n)$ denotes the set of all polynomials of $\Irr_p$ of degree $n$.
\end{definition}

\begin{remark}
The reader can notice a slight difference in the definition of $\Irr_p$ with respect to \cite[Definition 2.3]{SUwpc}. Indeed, in \cite{SUwpc} we excluded the polynomials $x+1$ and $x-1$ from $\Irr_p$, because $1$ and $-1$ are the only $\theta_{\frac{1}{2}}$-periodic elements in $G^{q}_{\frac{1}{2}}$ which are not root of any reversed binary tree, for any power $q$ of $p$. If $k \in C_2 \cup C_3 \cup C_3^-$ this phenomenon does not occur, namely any $\theta_k$-periodic element is root of a reversed binary tree. 
\end{remark}

The following lemma and theorems can be proved respectively as \cite[Lemma 2.5, Theorem 2.6, Theorem 2.7]{SUwpc} using the current more general notations of $\theta_k$ and $f^{Q_k}$ in place of $\theta_{\frac{1}{2}}$ and $f^R$.
\begin{lemma}\label{seq_2}
Let $f$ be a polynomial of positive degree $n$ in $\F_p [x]$. Suppose that $\beta$ is a root of $f$ and that $\beta = \theta_{k} (\alpha)$ for some $\alpha, \beta$ in suitable extensions of $\F_p$. Then, $\alpha$ and $\alpha^{-1}$ are roots of $f^{Q_k}$.
\end{lemma}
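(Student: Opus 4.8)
The plan is to unwind the definition of the $Q_k$-transform directly. Writing $f(y)=\sum_{i=0}^{n}a_i y^i$ with $a_n\neq 0$ and substituting $\theta_k(x)=k\cdot\frac{x^2+1}{x}$, one obtains
\[
f^{Q_k}(x)=\left(\frac{x}{k}\right)^n f(\theta_k(x))=\sum_{i=0}^{n}a_i\,k^{\,i-n}\,x^{\,n-i}\,(x^2+1)^i,
\]
which is a genuine polynomial in $\F_p[x]$; recording this elementary identity is the first step, since it is what makes it legitimate to talk about the roots of $f^{Q_k}$ in an extension of $\F_p$.

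Next I would observe that $\alpha\notin\{0,\infty\}$. Since $\theta_k$ sends both $0$ and $\infty$ to $\infty$, while $\beta=\theta_k(\alpha)$ is a root of the polynomial $f$ and hence lies in a finite extension of $\F_p$, the element $\alpha$ must be a nonzero element of some extension of $\F_p$, and then $\theta_k(\alpha)=k(\alpha+\alpha^{-1})=\beta$ is given by the formula. A one-line evaluation now gives $f^{Q_k}(\alpha)=\left(\frac{\alpha}{k}\right)^n f(\theta_k(\alpha))=\left(\frac{\alpha}{k}\right)^n f(\beta)=0$, because $f(\beta)=0$ and $(\alpha/k)^n\neq 0$. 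Hence $\alpha$ is a root of $f^{Q_k}$.

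For $\alpha^{-1}$ I would exploit the obvious symmetry of $\theta_k$ under $x\mapsto x^{-1}$: as $\alpha\neq 0,\infty$, the element $\alpha^{-1}$ is again nonzero and $\theta_k(\alpha^{-1})=k(\alpha^{-1}+\alpha)=\theta_k(\alpha)=\beta$, so the same computation yields $f^{Q_k}(\alpha^{-1})=\left(\frac{\alpha^{-1}}{k}\right)^n f(\beta)=0$; if $\alpha=\pm 1$ then $\alpha=\alpha^{-1}$ and there is nothing further to check. This simply reproduces the proof of \cite[Lemma 2.5]{SUwpc} with $\theta_{1/2}$ replaced by $\theta_k$. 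No step is a genuine obstacle: the only point that demands a moment's care is the bookkeeping that $f^{Q_k}$ is honestly a polynomial and that $\alpha$ avoids the two points $0,\infty$ at which $\theta_k$ is not given by $k(x+x^{-1})$, so that the substitution $\theta_k(\alpha)=\beta$ is valid; everything else is a direct evaluation.
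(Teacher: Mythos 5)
Your proof is correct and is essentially the argument the paper has in mind: the paper simply cites \cite[Lemma 2.5]{SUwpc}, whose proof is exactly this direct substitution, namely checking that $f^{Q_k}$ is a polynomial, that $\alpha\neq 0,\infty$, and evaluating $f^{Q_k}(\alpha)=(\alpha/k)^n f(\beta)=0$ together with the symmetry $\theta_k(\alpha^{-1})=\theta_k(\alpha)$. Nothing further is needed.
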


\begin{theorem}
Let $f$ be a polynomial of $\Irr_p(n) \backslash \{ x, x+1, x-1 \}$, for some positive integer $n$. The following hold.
\begin{itemize}
\item If the set of roots of $f$ is not inverse-closed, then $\tilde{f}_{\theta_k} \in \Irr_p(n)$.
\item If the set of roots of $f$ is inverse-closed, then $n$ is even and $\tilde{f}_{\theta_k} \in \Irr_p(n/2)$.
\end{itemize}
\end{theorem}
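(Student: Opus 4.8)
The plan is to work with a fixed non-zero root $\alpha$ of $f$ in $\F_{p^n} = \F_p(\alpha)$; since $f \notin \{x, x+1, x-1\}$ we have $\alpha \neq 0$ and $\alpha \neq \pm 1$. Put $\beta = \theta_k(\alpha) = k(\alpha + \alpha^{-1})$, so that by definition $\tilde{f}_{\theta_k}$ is the minimal polynomial of $\beta$ over $\F_p$, and set $m = \deg \tilde{f}_{\theta_k} = [\F_p(\beta):\F_p]$. The choice of $\alpha$ is immaterial: any other root of $f$ is $\sigma(\alpha)$ for a power $\sigma$ of the Frobenius $x \mapsto x^p$, and then $\theta_k(\sigma(\alpha)) = \sigma(\beta)$ (as $k \in \F_p$), which has the same minimal polynomial over $\F_p$. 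Everything then comes down to locating $\F_p(\beta)$ inside the tower $\F_p \subseteq \F_p(\beta) \subseteq \F_p(\alpha)$.

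First I would record two elementary facts. On one hand $\beta \in \F_p(\alpha)$, so $m \mid n$. On the other hand $\alpha$ is a root of $kX^2 - \beta X + k \in \F_p(\beta)[X]$, a polynomial of degree $2$ since $k \neq 0$, so $[\F_p(\alpha):\F_p(\beta)] \le 2$, i.e. $n \le 2m$. Combining $m \mid n$ with $n \le 2m$ leaves exactly two possibilities: either $n = m$, in which case $\F_p(\beta) = \F_p(\alpha)$ and $\tilde{f}_{\theta_k} \in \Irr_p(n)$; or $n = 2m$, in which case $[\F_p(\alpha):\F_p(\beta)] = 2$, $n$ is even, $\F_p(\beta) = \F_{p^{n/2}}$ is the unique index-$2$ subfield of $\F_{p^n}$, and $\tilde{f}_{\theta_k} \in \Irr_p(n/2)$. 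Thus it only remains to match the case $n = 2m$ with the inverse-closedness of the root set $\{\alpha, \alpha^p, \dots, \alpha^{p^{n-1}}\}$ of $f$.

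Next I would prove that $n = 2m$ occurs precisely when that root set is inverse-closed. If it is, then $\alpha^{-1} = \alpha^{p^j}$ for some $0 \le j < n$; applying $x \mapsto x^{p^j}$ once more gives $\alpha^{p^{2j}} = \alpha$, hence $n \mid 2j$, and since $j = 0$ would force $\alpha^2 = 1$ (excluded), we must have $2j = n$, so $n$ is even and $\alpha^{-1} = \alpha^{p^{n/2}}$. Then $\beta = k(\alpha + \alpha^{-1})$ is fixed by $x \mapsto x^{p^{n/2}}$, so $\beta \in \F_{p^{n/2}}$, whence $m \mid n/2$; together with $n \le 2m$ this yields $m = n/2$. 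Conversely, if $n = 2m$ then $\beta \in \F_p(\beta) = \F_{p^{n/2}}$ is fixed by $x \mapsto x^{p^{n/2}}$, so $\alpha^{p^{n/2}} + \alpha^{-p^{n/2}} = \alpha + \alpha^{-1}$; thus $\alpha^{p^{n/2}}$ is a root of $X^2 - (\alpha + \alpha^{-1})X + 1 = (X-\alpha)(X-\alpha^{-1})$, so $\alpha^{p^{n/2}} \in \{\alpha, \alpha^{-1}\}$. The value $\alpha^{p^{n/2}} = \alpha$ is impossible, as it would place $\alpha$ in $\F_{p^{n/2}}$ against $[\F_p(\alpha):\F_p] = n$; hence $\alpha^{-1} = \alpha^{p^{n/2}}$ is a conjugate of $\alpha$, i.e. a root of $f$, so the root set is inverse-closed. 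Contraposing, a non-inverse-closed root set forces $n = m$ (first bullet), while the inverse-closed case is the one just analysed (second bullet).

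The main obstacle, and the step I would write out most carefully, is the bookkeeping in the inverse-closed case: deducing $2j = n$ from $n \mid 2j$ with $0 \le j < n$, and ruling out the degenerate alternatives $\alpha = \pm 1$ and $\alpha^{p^{n/2}} = \alpha$. These are exactly where the hypothesis $f \notin \{x, x+1, x-1\}$ and the two constraints $m \mid n$ and $n \le 2m$ are genuinely used; the rest is routine finite-field theory. This is in essence the argument of \cite[Theorem 2.6, Theorem 2.7]{SUwpc}, with $\theta_k$ in place of $\theta_{\frac{1}{2}}$, and nothing about the value of $k$ enters beyond $k \neq 0$.
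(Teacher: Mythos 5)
Your proof is correct, and it is essentially the argument the paper delegates to \cite{SUwpc}: bound $[\F_p(\alpha):\F_p(\beta)]\le 2$ via $kX^2-\beta X+k$, use $m\mid n$, and characterize the degree-halving case by $\alpha^{-1}=\alpha^{p^{n/2}}$, i.e.\ inverse-closedness of the root set, with the excluded polynomials $x, x\pm 1$ ruling out the degenerate cases. Nothing beyond $k\in\F_p^*$ is needed, exactly as the paper's remark indicates, so this matches the intended proof.
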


\begin{theorem}\label{seq_4}
Let $f(x) = x^n+a_{n-1} x^{n-1} + \dots + a_1 x + a_0 \in \Irr_p (n)$ for some positive integer $n$. The following hold.
\begin{itemize}
\item $0$ is not a root of $f^{Q_k}$.
\item The set of roots of $f^{Q_k}$ is closed under inversion.
\item Either $f^{Q_k} \in \Irr_p (2n)$ or $f^{Q_k}$ splits into the product of two polynomials $m_{\alpha}, m_{\alpha^{-1}}$ in  $\Irr_p (n)$, which are respectively the minimal polynomial of $\alpha$ and $\alpha^{-1}$, for some $\alpha \in \F_{p^n}$. Moreover, in the latter case at least one among $\alpha$ and $\alpha^{-1}$ is not $\theta_{k}$-periodic.
\end{itemize}
\end{theorem}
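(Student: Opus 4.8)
The plan is to compute $f^{Q_k}$ as an explicit product of quadratics over the splitting field of $f$ and read the three assertions off that formula. Writing $\beta_1,\dots,\beta_n$ for the distinct roots of $f$ in $\F_{p^n}$ and substituting $\theta_k(x)=k\frac{x^2+1}{x}$ into $f^{Q_k}(x)=\left(\frac{x}{k}\right)^n f(\theta_k(x))$, a short computation (the $x^n$ cancels and the factor $k^{-n}$ distributes over the $n$ quadratics) gives the monic degree-$2n$ polynomial
\[
f^{Q_k}(x)=\prod_{i=1}^{n}\left(x^2-\frac{\beta_i}{k}\,x+1\right).
\]
From this the first two bullets are immediate: $f^{Q_k}(0)=\prod_i 1=1\ne0$; and each quadratic factor, having constant term $1$, has roots of the form $\gamma_i,\gamma_i^{-1}$, so the root set of $f^{Q_k}$ is $\bigcup_i\{\gamma_i,\gamma_i^{-1}\}$ and is inverse-closed. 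I would also record for later: $\theta_k(\gamma_i)=\beta_i$, so $\theta_k(\gamma)$ is a root of $f$ for every root $\gamma$ of $f^{Q_k}$; and for $n\ge2$ the $2n$ roots are distinct (no $\beta_i$ equals $\pm2k$, so each factor is separable, and distinct factors are coprime since $\theta_k$ separates their roots), i.e.\ $f^{Q_k}$ is separable.

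For the third bullet I would first bound the degree of a root. If $\gamma$ is a root of $f^{Q_k}$ and $\beta=\theta_k(\gamma)$, then $\beta=k(\gamma+\gamma^{-1})\in\F_p(\gamma)$ while $\F_p(\beta)=\F_{p^n}$, so $\F_{p^n}\subseteq\F_p(\gamma)$, and $\gamma$ satisfies a quadratic over $\F_{p^n}$; hence $[\F_p(\gamma):\F_p]\in\{n,2n\}$. If some root has degree $2n$, its minimal polynomial over $\F_p$ has degree $2n=\deg f^{Q_k}$ and divides $f^{Q_k}$, so $f^{Q_k}\in\Irr_p(2n)$. Otherwise every root has degree $n$, each $m_\gamma\in\Irr_p(n)$; set $\alpha:=\gamma_1$. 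The key claim is $m_\alpha\ne m_{\alpha^{-1}}$, which then forces $f^{Q_k}=m_\alpha m_{\alpha^{-1}}$ by comparing degrees of coprime monic factors. For $n=1$ this is clear unless $\alpha=\alpha^{-1}\in\{1,-1\}$, in which case $f=x\mp2k$ and $f^{Q_k}=(x\mp1)^2=m_\alpha m_{\alpha^{-1}}$, still of the asserted shape. For $n\ge2$ we have $\alpha\ne\alpha^{-1}$ (otherwise $\theta_k(\alpha)=\pm2k\in\F_p$ could not be a root of $f$); if $m_\alpha=m_{\alpha^{-1}}$, then $\alpha^{-1}=\alpha^{p^j}$ for some $0<j<n$, whence, applying Frobenius again, $n\mid2j$ with $n\nmid j$, so $n$ is even and the $\F_p$-conjugacy class of $\alpha$ is closed under inversion; but each $\gamma_i$ is a conjugate of $\alpha$ or of $\alpha^{-1}$ (the two roots of $x^2-\frac{\beta_i}{k}x+1$ are $\sigma(\alpha),\sigma(\alpha^{-1})$ for the power $\sigma$ of Frobenius with $\sigma(\beta_1)=\beta_i$), so all $2n$ distinct roots of $f^{Q_k}$ would be squeezed into an $n$-element set — a contradiction. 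This proves the claim.

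It remains to handle the ``moreover''. Suppose, in the reducible case, that both $\alpha$ and $\alpha^{-1}$ are $\theta_k$-periodic and put $\beta=\theta_k(\alpha)=\theta_k(\alpha^{-1})$; then $\theta_k^r(\alpha)=\alpha$ forces $\theta_k^r(\beta)=\beta$, so $\beta$ lies on a cycle of $\theta_k$. Any $\theta_k$-periodic preimage of $\beta$ must lie on $\beta$'s cycle and hence be its unique in-cycle predecessor, so applying this to both $\alpha$ and $\alpha^{-1}$ yields $\alpha=\alpha^{-1}$ — impossible when $n\ge2$. The only surviving possibility is $n=1$ with $\alpha=\alpha^{-1}\in\{1,-1\}$ and $f=x\mp2k$; but for $k\in C_2\cup C_3\cup C_3^-$ the vertices $1,-1$ are not $\theta_k$-periodic, as one reads off the structure of $G^q_{\theta_k}$ in \cite{SUk} (e.g.\ for $k\in C_2$, $(2k)^2=-1$ gives $1\to2k\to0\to\infty$), so that possibility does not occur, and the assertion follows. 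I expect this last step to be the main obstacle: the rest is bookkeeping with the explicit product formula and field degrees, whereas ruling out two $\theta_k$-periodic preimages of $\beta$ genuinely uses the functional-graph picture of $\theta_k$ and, in the degenerate degree-one case, the concrete cycle data of \cite{SUk}.
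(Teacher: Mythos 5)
Your argument is correct, and it is in effect a self-contained version of what the paper only cites: the paper proves Theorem \ref{seq_4} by declaring it ``provable as \cite[Theorem 2.7]{SUwpc}'' with $\theta_{\frac{1}{2}}$, $f^R$ replaced by $\theta_k$, $f^{Q_k}$, so no argument is displayed. Your explicit factorization $f^{Q_k}(x)=\prod_i\bigl(x^2-\tfrac{\beta_i}{k}x+1\bigr)$ over $\F_{p^n}$ immediately gives the first two bullets and the fact recorded in Lemma \ref{seq_2} (each root $\gamma$ satisfies $\theta_k(\gamma)=\beta_i$), and your degree dichotomy $[\F_p(\gamma):\F_p]\in\{n,2n\}$, the separability plus conjugacy-class count ruling out $m_{\alpha}=m_{\alpha^{-1}}$ for $n\geq 2$, and the unique in-cycle-predecessor argument for the ``moreover'' clause are all sound; this is the same circle of ideas, just written out rather than imported.

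The one place that needs a word of care is the degenerate case $n=1$, $f=x\mp 2k$, where $f^{Q_k}=(x\mp1)^2$ and the ``moreover'' clause reduces to the claim that $\pm1$ is not $\theta_k$-periodic. Two comments. First, the restriction to $k\in C_2\cup C_3\cup C_3^-$ is genuinely needed there: for $k=\pm\frac{1}{2}$ the point $1$ (resp.\ $-1$) is a fixed point of $\theta_k$ and the clause fails for $f=x\mp1$, which is precisely why \cite{SUwpc} excluded $x\pm1$ from $\Irr_p$; so your reading of the theorem as tacitly assuming $k\in C_2\cup C_3\cup C_3^-$ is the intended one. Second, for $k\in C_3$ the claim is not literally ``read off'' \cite{SUk} the way your explicit computation $1\to 2k\to 0\to\infty$ settles $C_2$; a clean in-paper justification is: $1$ is the unique $\theta_k$-preimage of $2k$ (the preimage equation is $k(x-1)^2=0$), while by the remark preceding Lemma \ref{seq_2} together with Lemma \ref{tech_lemma_case_3}(1)--(3) every $\theta_k$-periodic element is the root of a reversed binary tree of positive depth, hence has a preimage distinct from its in-cycle predecessor; so if $1$ were periodic, $2k$ would be a cycle vertex with no such extra preimage, a contradiction. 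The case $k\in C_3^-$ then follows from Lemma \ref{k-k}. With that small patch your proof is complete.
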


Before dealing with the construction of sequences of irreducible polynomials, we prove some additional results for the dynamics of the maps $\theta_k$, when $k$ belongs respectively to $C_2$, $C_3$ or $C_3^-$. Such results complement our investigation \cite{SUk}.

\subsection{Case $k \in C_2$.}\label{case_2}
In this section we fix an odd prime $p \equiv 1 \pmod{4}$ and a root $k$ of $x^2 + \frac{1}{4}$ in $\F_p$. 

The notations of the current section are the same as in \cite[Section 3]{SUk}, except for the symbol $\nu_2$, which we introduce in the present paper. For the reader's convenience we review all the relevant notations:
\begin{itemize}
\item $R = \Z[i]$;
\item $\alpha = 1 + \sqrt{-1} \in R$;
\item $N$ is the norm function on $R$, which takes any element $a + ib \in \Z[i]$ to $N(a+ib) = a^2+b^2$;
\item $E$ is the elliptic curve of equation $y^2 = x^3+x$ over $\F_p$;
\item $\pi_p$ is the representation in $R$ of the Frobenius endomorphism of $E$ over $\F_p$, namely the map which takes any point $(x,y)$ of $E$ to $(x^p, y^p)$;
\item $E(\F_{p^n})$ is the group of rational points of $E$ over $\F_{p^n}$, for any positive integer $n$, while
\begin{eqnarray*}
E(\F_{p})^*  & = &  \{  O , (0,0), (i_p, 0), (-i_p, 0) \} \subseteq E(\F_{p^n}),\\
E(\F_{p})^*_x & =  & \{  \infty , 0, i_p, - i_p \} \subseteq \Pro(\F_{p^n}),
\end{eqnarray*}
being $O$ the point at infinity of  $E$ and $i_p$ a square root of $-1$ in $\F_p$;
\item $\rho_0$ is the element of $R$ defined as
\begin{displaymath}
\rho_0 = 
\begin{cases}
\alpha, & \text{if $\alpha^{-2} \equiv k \Mod{\pi_p}$,}\\
\overline{\alpha}, & \text{if $\overline{\alpha}^{-2} \equiv k \Mod{\pi_p}$};
\end{cases}
\end{displaymath}
\item $\Pro(\F_{p^n}) = A_n \cup B_n$, where $A_n$ and $B_n$ are two disjoint subsets of $\Pro(\F_{p^n})$ for any positive integer $n$;
\item for any positive integer $m$, which we can express as $m =2^e \cdot f$ for some odd integer $f$ and non-negative integer $e$,  we denote by $\nu_2 (m)$ the exponent of the greatest power of $2$ which divides $m$, namely $\nu_2 (m) = e$.
\end{itemize}

According to \cite[Lemma 3.2]{SUk}, either all the elements belonging to a connected component of $G^{p^n}_{\theta_k}$ are in $A_n$ or they are in $B_n$. 

In \cite[Section 3]{SUk} we defined the sets
\begin{eqnarray*}
E(\F_{p^{n}})_{A_n} & = & \left\{(x,y) \in E(\F_{p^{n}}) : x \in A_n \backslash \{ \infty \} \right\},\\
E(\F_{p^{2n}})_{B_n} & = & \left\{(x,y) \in E(\F_{p^{2n}}) : x \in B_n \backslash \{ \infty \} \right\}
\end{eqnarray*} 
and proved the existence of two isomorphisms
\begin{displaymath}
\begin{array}{lrcl}
\psi_n : & E(\F_{p^{n}})_{A_n} \cup E(\F_p)^* & \to & R / (\pi_p^n-1) R,\\
\widetilde{\psi}_n : & E(\F_{p^{2n}})_{B_n} \cup E(\F_p)^* & \to & R / (\pi_p^n+1) R.
\end{array}
\end{displaymath}

The dynamics of the map $\theta_k$ on $A_n$ (resp. $B_n$) can be studied relying upon the iterations of $[\rho_0]$ in $R / (\pi_p^n-1) R$ (resp. $R / (\pi_p^n+1) R$). In particular, according to \cite[Theorem 3.5]{SUk}, the depth of the trees in a connected component formed by elements of $A_n$ (resp. $B_n$) is equal to $e_0$, being $\rho_0^{e_0}$ the greatest power of $\rho_0$ which divides $\pi_p^n-1$ (resp. $\pi_p^n+1$). Indeed, since $N(\rho_0)=2$ and the norm of any  irreducible factor of $\pi_p^n-1$ (resp. $\pi_p^n+1$) different from $\rho_0$ and $\overline{\rho}_0$ is odd, we have that $e_0 = \nu_{2} (N(\pi_p^n-1))$ (resp. $\nu_{2} (N(\pi_p^n+1))$).

We prove the following technical lemma.

\begin{lemma}\label{seq_3}
Let $m$ and $n$ be two positive integers. Then, the following hold:
\begin{enumerate}
\item $\nu_2 (N(\pi_p^n - 1))  \geq 2$;
\item if $\nu_2 (N(\pi_p^n - 1)) = 2$, then $\nu_2 (N(\pi_p^n + 1))  \geq 3$;
\item if $\nu_2 (N(\pi_p^n - 1)) \geq 3$, then $\nu_2 (N(\pi_p^n + 1)) = 2$;
\item $\nu_2 (N(\pi_p^{2^{i+1} m}-1)) = \nu_2 (N(\pi_p^{2^{i} m}-1))+2$, for any positive integer $i$.
\end{enumerate}
\end{lemma}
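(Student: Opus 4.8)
The plan is to transfer every statement into the arithmetic of $R=\Z[i]$ with respect to the prime $\alpha=1+i$. Since $2$ is ramified in $R$ (indeed $2=-i\alpha^{2}$, and $\alpha$ is, up to units, the only prime of $R$ above $2$), one has $\nu_{2}(N(x))=\nu_{\alpha}(x)$ for every $x\in R$, where $\nu_{\alpha}$ denotes the $\alpha$-adic valuation. Thus all four assertions become statements about $\nu_{\alpha}(\pi_{p}^{n}-1)$ and $\nu_{\alpha}(\pi_{p}^{n}+1)$.

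The one arithmetic input I will use is the congruence $\pi_{p}\equiv 1\Mod{\alpha^{2}}$, i.e.\ $\alpha^{2}\mid \pi_{p}-1$ in $R$. This holds because $N(\pi_{p}-1)=|R/(\pi_{p}-1)R|$ is divisible by $4$: via the isomorphism $\psi_{1}$ this group contains the image of $E(\F_{p})^{*}$, which is the full $2$-torsion of $E$ (of order $4$) and is $\F_{p}$-rational because $p\equiv 1\Mod{4}$ forces $i_{p}\in\F_{p}$. Hence $\nu_{\alpha}(\pi_{p}-1)\geq 2$. Writing $\pi_{p}=1+\alpha^{2}\gamma$ with $\gamma\in R$ gives $\pi_{p}^{n}\equiv 1\Mod{\alpha^{2}}$ for every positive integer $n$, so $\nu_{\alpha}(\pi_{p}^{n}-1)\geq 2$, which is~(1).

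For (2) and (3), since $\alpha^{2}\mid\pi_{p}^{n}-1$ we may write $\pi_{p}^{n}=1+\alpha^{2}\beta$ with $\beta\in R$; then $\pi_{p}^{n}-1=\alpha^{2}\beta$ and, using $2=-i\alpha^{2}$, $\pi_{p}^{n}+1=2+\alpha^{2}\beta=\alpha^{2}(\beta-i)$, whence $\nu_{\alpha}(\pi_{p}^{n}-1)=2+\nu_{\alpha}(\beta)$ and $\nu_{\alpha}(\pi_{p}^{n}+1)=2+\nu_{\alpha}(\beta-i)$. In $R/\alpha R\cong\Ftwo$ we have $1\equiv i$. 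If $\nu_{\alpha}(\pi_{p}^{n}-1)=2$ then $\alpha\nmid\beta$, so $\beta\equiv 1\equiv i\Mod{\alpha}$ and $\nu_{\alpha}(\beta-i)\geq 1$, giving $\nu_{\alpha}(\pi_{p}^{n}+1)\geq 3$; this is~(2). If $\nu_{\alpha}(\pi_{p}^{n}-1)\geq 3$ then $\alpha\mid\beta$, so $\beta-i\equiv -i\equiv 1\Mod{\alpha}$ is a unit and $\nu_{\alpha}(\beta-i)=0$, giving $\nu_{\alpha}(\pi_{p}^{n}+1)=2$; this is~(3).

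Finally, for (4) fix a positive integer $i$ and set $\lambda=\pi_{p}^{2^{i-1}m}$ (a positive power of $\pi_{p}$, since $i\geq 1$), so that $\pi_{p}^{2^{i}m}=\lambda^{2}$. By~(1) we have $\nu_{\alpha}(\lambda-1)\geq 2$, and from $\lambda+1=(\lambda-1)+2$ with $\nu_{\alpha}(2)=2$ we get $\nu_{\alpha}(\lambda+1)\geq 2$; hence $\nu_{\alpha}(\pi_{p}^{2^{i}m}-1)=\nu_{\alpha}(\lambda-1)+\nu_{\alpha}(\lambda+1)\geq 4\geq 3$. Applying~(3) with $n=2^{i}m$ yields $\nu_{\alpha}(\pi_{p}^{2^{i}m}+1)=2$, and therefore
\begin{equation*}
\nu_{\alpha}(\pi_{p}^{2^{i+1}m}-1)=\nu_{\alpha}(\pi_{p}^{2^{i}m}-1)+\nu_{\alpha}(\pi_{p}^{2^{i}m}+1)=\nu_{\alpha}(\pi_{p}^{2^{i}m}-1)+2,
\end{equation*}
which is~(4) after rewriting $\nu_{\alpha}$ as $\nu_{2}\circ N$. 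The only delicate points are the normalization $\nu_{2}(N(x))=\nu_{\alpha}(x)$ (because $2$ ramifies in $R$) and the base congruence $\pi_{p}\equiv 1\Mod{\alpha^{2}}$ coming from the $\F_{p}$-rationality of $E[2]$; granting these, everything reduces to elementary bookkeeping with the prime $\alpha$.
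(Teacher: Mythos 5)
Your proof is correct, but it follows a genuinely different route from the paper's for parts (1)--(3). For (1) the paper stays inside its dynamical framework: it pushes the chain $\theta_k(i_p)=0$, $\theta_k(0)=\infty$ through $\psi_n$ to get $[\rho_0]Q=P\neq\mathcal{O}$ and $[\rho_0]P=\mathcal{O}$ in $R/(\pi_p^n-1)R$, which forces $e_0\geq 2$ separately for every $n$; you instead work only at $n=1$, using that the full $2$-torsion $E(\F_p)^*$ is $\F_p$-rational, so $4$ divides $|R/(\pi_p-1)R|=N(\pi_p-1)$, hence $\alpha^2\mid\pi_p-1$, and then $\pi_p-1\mid\pi_p^n-1$ settles all $n$ at once --- a slicker argument that additionally invokes $|R/xR|=N(x)$ and Lagrange, both unproblematic here. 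For (2) and (3) the paper writes $\pi_p^n-1=a+ib$ and argues by parity of $a$ and $b$ modulo $4$ and $8$ on the norms, whereas you work uniformly with the ramified prime $\alpha=1+i$, the identity $2=-i\alpha^2$, and residues modulo $\alpha$; the arithmetic content is the same (the normalization $\nu_2\circ N=\nu_{\alpha}$ is exactly what the paper uses implicitly when it identifies the tree depth with $\nu_2(N(\pi_p^n\pm 1))$), but your valuation bookkeeping reduces (2)/(3) to the dichotomy of whether $\alpha$ divides $\beta$, which is cleaner than the parity computation. Part (4) is essentially identical in both proofs (multiplicativity over $\pi_p^{2n}-1=(\pi_p^n-1)(\pi_p^n+1)$ together with (3)); you derive the bound $\nu_{\alpha}(\pi_p^{2^i m}-1)\geq 4$ where the paper gets $\geq 5$, and either is enough to apply (3).
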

\begin{proof}
Since $\pi_p^n-1 \in \Z[i]$, we have that $\pi_p^n-1 = a+ib$, for some $a, b \in \Z$, and consequently $\pi_p^n+1 = (a+2)+ib$. We prove separately the statements.
\begin{enumerate}[(1)]
\item Let
\begin{equation*}
S = R / \rho_0^{e_0} R \times R / \rho_1^{e_1} R \cong R / (\pi_p^n-1) R
\end{equation*} 
for some element $\rho_1 \in R$ coprime to $\rho_0$ and some non-negative integers $e_0, e_1$.

Consider the following points in $S$:
\begin{eqnarray*}
Q & = & (Q_0, Q_1) = \psi_n (i_p, 0);\\
P & = & (P_0, P_1) = \psi_n (0,0);\\
\mathcal{O} & = & (0,0) = \psi_n (O).
\end{eqnarray*}
Since $\theta_k (i_p) = 0$ and $\theta_k (0) = \infty$, we have that $[\rho_0] Q = P$ and $[\rho_0] P = \mathcal{O}$. Moreover, by the fact that $\rho_0$ and $\rho_1$ are coprime, we deduce that $P_1 = 0$ and $Q_1 = 0$. In addition, $Q_0 \not = 0$ and $P_0 \not = 0$. Therefore, $[\rho_0] Q_0 \not = 0$ in $R / \rho_0^{e_0} R$. Since this latter is true only if $e_0 \geq 2$, we get the result. 
\item By hypothesis, 
\begin{equation}\label{intro_eq_1}
N(\pi_p^n-1) = a^2 + b^2 = 4 c,
\end{equation}
for some odd integer $c$, and consequently $a$ and $b$ have the same parity. Indeed, $a$ and $b$ are both even. Suppose, on the contrary, that $a$ and $b$ are both odd. Then, $a \equiv \pm 1 \pmod{4}$, $b \equiv \pm 1 \pmod{4}$ and $a^2+b^2 \equiv 2 \pmod{4}$, in contradiction with (\ref{intro_eq_1}).

Evaluating $N(\pi_p^n+1)$ we get 
\begin{equation*}
N(\pi_p^n+1) = a^2 + 4 a + 4 + b^2 = 4 c + 4 (1+a) = 4 (1+a+c).
\end{equation*} 
We notice that $1+a$ is odd, because $a$ is even. Therefore, $1+a+c$ is even and consequently $N(\pi_p^n+1) \equiv 0 \pmod{8}$. Hence, $\nu_2 (N(\pi_p^n + 1))  \geq 3$.

\item By hypothesis, 
\begin{equation}
N(\pi_p^n-1) = a^2 + b^2 = 8 c,
\end{equation}
for some integer $c$. In particular, $a$ and $b$ are both even, as proved in (2). 

We evaluate $N(\pi_p^n+1)$ and get
\begin{equation*}
N(\pi_p^n+1) = a^2 + 4 a + 4 + b^2 = 8 c + 4 (1+a).
\end{equation*} 

We notice that $1+a$ is odd, because $a$ is even. Therefore,
\begin{equation*}
N(\pi_p^n+1) \equiv 4 \cdot (1+a) \not \equiv 0 \pmod{8}
\end{equation*}
and consequently $\nu_2 (N(\pi_p^n + 1)) = 2$.

\item Set $n:= 2^{i-1} m$. Then,
\begin{eqnarray*}
\nu_2 (N(\pi_p^{2^{i} m}-1)) & = & \nu_2 (N(\pi_p^{2 n}-1)) =  \nu_2 ( N(\pi_p^{n}-1) ) + \nu_2 ( N(\pi_p^{n}+1) ).
\end{eqnarray*}
From (1), (2) and (3) we get that $\nu_2 ( N(\pi_p^{n}-1) ) + \nu_2 ( N(\pi_p^{n}+1) ) \geq 5$. Hence, $\nu_2 (N(\pi_p^{2^{i} m}-1)) \geq 5$.

Set now $n:=2^i m$. Then,
\begin{eqnarray*}
\nu_2 (N(\pi_p^{2^{i+1} m}-1)) & = & \nu_2 ( N(\pi_p^{n}-1) ) + \nu_2 ( N(\pi_p^{n}+1) ).
\end{eqnarray*} 
Since $\nu_2 (N(\pi_p^{2^{i} m}-1)) \geq 5$, from (3) we get that $\nu_2 ( N(\pi_p^{n}+1) ) =2$. All considered, 
\begin{eqnarray*}
\nu_2 (N(\pi_p^{2^{i+1} m}-1)) & = &  \nu_2 (N(\pi_p^{2^{i} m}-1)) + 2.
\end{eqnarray*} 
\end{enumerate}
\end{proof}

\subsection{Case $k \in C_3$}\label{case_3}
In this section we fix an odd prime $p \equiv 1, 2,$ or $4 \pmod{7}$ and a root $k$ of $x^2 + \frac{1}{2} x + \frac{1}{2}$ in $\F_p$.

The notations of the current section are the same as in \cite[Section 4]{SUk}, except for the symbol $\nu_{r}$, which we introduce in the present paper. For the reader's convenience we review all the relevant notations:
\begin{itemize}
\item $R = \Z[\alpha]$, where $\alpha = \frac{1+\sqrt{-7}}{2}$;
\item $N$ is the norm function on $R$, which takes any element $a + b \alpha \in \Z[\alpha]$ to $(a+b \alpha) \cdot \overline{(a+b \alpha)}$;
\item $E$ is the elliptic curve of equation $y^2 = x^3 - 35 x + 98$ over $\F_p$;
\item $\pi_p$ is the representation in $R$ of the Frobenius endomorphism of $E$ over $\F_p$;
\item $\sigma \equiv 2 k +1 \pmod{p}$, while $\overline{\sigma} \equiv - 2 k \pmod{p}$;
\item $E(\F_{p^n})$ is the group of rational points of $E$ over $\F_{p^n}$, for any positive integer $n$, while
\begin{eqnarray*}
E(\F_{p})^*  & = & \{  O , (-7,0), (\sigma+3, 0), (\overline{\sigma}+3, 0) \} \subseteq E(\F_{p^n}),\\
E(\F_{p})^*_x & = &  \{  \infty , -7, \sigma+3, \overline{\sigma}+3 \} \subseteq \Pro(\F_{p^n}),
\end{eqnarray*}
being $O$ the point at infinity of $E$;
\item $\rho_0$ is the element of $R$ defined as 
\begin{displaymath}
\rho_0 = 
\begin{cases}
\alpha, & \text{if $\alpha \equiv \sigma \Mod{\pi_p}$,}\\
\overline{\alpha}, & \text{if $\overline{\alpha} \equiv \sigma \Mod{\pi_p}$};
\end{cases}
\end{displaymath}
\item $\Pro(\F_{p^n}) = A_n \cup B_n$, where $A_n$ and $B_n$ are two disjoint subsets of $\Pro(\F_{p^n})$ for any positive integer $n$;
\item for any $r \in R$ such that $r = r_0^{e_0} \cdot r_1$,  where $r_0$ and $r_1$ are two coprime elements of $R$ and $e_0$ is a nonnegative integer, we denote by $\nu_{r_0} (r)$ the exponent of the greatest power of $r_0$ which divides $r$, namely $\nu_{r_0} (r) = e_0$.
\end{itemize}

In \cite[Section 4]{SUk} we introduced the rational maps $\eta_k$ and $\chi_k$, defined on $\Pro(\F_{p^n})$ in such a way that
\begin{equation*}
\theta_k (x) = \chi_k^{-1} \circ \eta_k \circ \chi_k(x)
\end{equation*}
for any $x \in \Pro (\F_{p^n})$. In virtue of this fact, the graphs $G^{p^n}_{\theta_k}$ and $G^{p^n}_{\eta_k}$ are isomorphic. 

According to \cite[Lemma 4.4]{SUk}, either all the elements belonging to a connected component of $G^{p^n}_{\eta_k}$ are in $A_n$ or they are in $B_n$. As in Section \ref{case_2} we define the sets $E(\F_{p^{n}})_{A_n}$ and $E(\F_{p^{2n}})_{B_n}$ and the isomorphisms $\psi_n$ and $\widetilde{\psi}_n$.

The dynamics of the map $\eta_k$ on $A_n$ (resp. $B_n$) can be studied relying upon the iterations of $[\rho_0]$ in $R / (\pi_p^n-1) R$ (resp. $R / (\pi_p^n+1) R$). In particular, according to \cite[Theorem 4.6]{SUk}, the depth of the trees in a connected component formed by elements of $A_n$ (resp. $B_n$) is equal to $e_0$, being $\rho_0^{e_0}$ the greatest power of $\rho_0$ which divides $\pi_p^n-1$ (resp. $\pi_p^n+1$), namely $e_0$ is equal to $\nu_{\rho_0} (\pi_p^n-1)$ (resp. $\nu_{\rho_0} (\pi_p^n+1)$).

We prove a technical lemma, which provides some useful results for the construction of the sequences of irreducible polynomials. 

\begin{lemma}\label{tech_lemma_case_3}
Let $n$ be a positive integer. Then, the following hold:
\begin{enumerate}
\item $\nu_{\rho_0} (\pi_p^n-1) \geq 1$;
\item if $\nu_{\rho_0} (\pi_p^n-1) = 1$, then $\nu_{\rho_0} (\pi_p^n+1) \geq 2$;
\item if $\nu_{\rho_0} (\pi_p^n-1) \geq 2$, then $\nu_{\rho_0} (\pi_p^n+1) = 1$;
\item $\nu_{\rho_0} (\pi_p^{2^{i+1}n}-1) = \nu_{\rho_0} (\pi_p^{2^i n} -1) + 1$, for any positive integer $i$.
\end{enumerate}
\end{lemma}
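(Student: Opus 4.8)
The plan is to follow the pattern of the proof of Lemma~\ref{seq_3}, taking advantage of the following structural facts about $R=\Z[\alpha]$ with $\alpha=\frac{1+\sqrt{-7}}{2}$: the ring $R$ is a principal ideal domain (the imaginary quadratic field $\Q(\sqrt{-7})$ has class number $1$); the element $\rho_0$ is prime with $N(\rho_0)=2$, so that $R/\rho_0 R$ is the field $\Ftwo$; and $2=\rho_0\overline{\rho}_0$, where $\rho_0$ and $\overline{\rho}_0$ are coprime (their sum is $\Tr(\alpha)=1$). In particular $\nu_{\rho_0}$ extends to a non-archimedean discrete valuation on the fraction field of $R$, with $\nu_{\rho_0}(2)=1$, and any element of $R$ not divisible by $\rho_0$ is congruent to $1$ modulo $\rho_0$. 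I shall also use that the norm of the Frobenius $\pi_p$ equals $p$.

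For (1): since $N(\pi_p)=p$ is odd, $\rho_0\nmid\pi_p$, hence $\pi_p\equiv 1\pmod{\rho_0}$ and therefore $\pi_p^n\equiv 1\pmod{\rho_0}$, i.e.\ $\nu_{\rho_0}(\pi_p^n-1)\ge 1$. (Alternatively one can reproduce the argument of Lemma~\ref{seq_3}(1): apply $\psi_n$ to a suitable point of $E(\F_p)^*$ whose $x$-coordinate in $E(\F_{p})^*_x$ is sent to $\infty$ by $\eta_k$, and conclude that the corresponding connected component has depth at least $1$.)

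For (2) and (3): write $\pi_p^n+1=(\pi_p^n-1)+2$ and recall $\nu_{\rho_0}(2)=1$. If $\nu_{\rho_0}(\pi_p^n-1)=1$, write $\pi_p^n-1=\rho_0 u$ with $\rho_0\nmid u$; then $\pi_p^n+1=\rho_0(u+\overline{\rho}_0)$, and since both $u$ and $\overline{\rho}_0$ reduce to $1$ in $R/\rho_0 R\cong\Ftwo$, we get $\rho_0\mid u+\overline{\rho}_0$, whence $\nu_{\rho_0}(\pi_p^n+1)\ge 2$. If instead $\nu_{\rho_0}(\pi_p^n-1)\ge 2$, then the two summands $\pi_p^n-1$ and $2$ have distinct $\rho_0$-adic valuations, so the strong triangle inequality gives $\nu_{\rho_0}(\pi_p^n+1)=\min\{\nu_{\rho_0}(\pi_p^n-1),1\}=1$.

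For (4): factor $\pi_p^{2^{i+1}n}-1=(\pi_p^{2^{i}n}-1)(\pi_p^{2^{i}n}+1)$, so it suffices to prove $\nu_{\rho_0}(\pi_p^{2^{i}n}+1)=1$ for every positive integer $i$; by (3) this follows once we know $\nu_{\rho_0}(\pi_p^{2^{i}n}-1)\ge 2$. Writing $\pi_p^{2^{i}n}-1=(\pi_p^{2^{i-1}n}-1)(\pi_p^{2^{i-1}n}+1)$ and combining $\nu_{\rho_0}(\pi_p^{2^{i-1}n}-1)\ge 1$ (from (1)) with $\nu_{\rho_0}(\pi_p^{2^{i-1}n}+1)\ge 1$ (which holds in all cases by (2)--(3)) yields $\nu_{\rho_0}(\pi_p^{2^{i}n}-1)\ge 2$, as required. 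The \emph{only} point needing real attention is (1): one must notice that, unlike the ramified situation of Lemma~\ref{seq_3} where $(2)=\rho_0^2$ forced the stronger bound $\ge 2$, here $2$ splits, so $\ge 1$ is at once what the statement asks for and what the residue-field argument delivers; the rest is a routine transcription of the ultrametric manipulations used to prove Lemma~\ref{seq_3}.
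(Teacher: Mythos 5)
Your proof is correct, but it follows a genuinely different route from the paper. The paper stays inside the elliptic-curve/dynamical framework of \cite[Section 4]{SUk}: it transports the question through the isomorphisms $\psi_n$ and $\widetilde{\psi}_n$, uses the specific points with $x$-coordinates $x_P=\overline{\sigma}+3$ and $x_Q=2\sigma-1$ together with $\eta_k(x_Q)=x_P$, $\eta_k(x_P)=\infty$, and argues via which of $A_n$, $B_n$ contains $x_Q$ and via tree depths in $G^{p^n}_{\eta_k}$ to get (1)--(3), then deduces (4) by the same multiplicative splitting you use. Your argument replaces all of this by elementary arithmetic in $R=\Z[\alpha]$: since $N(\rho_0)=2$, $\rho_0$ is prime with residue field $\Ftwo$, $2=\rho_0\overline{\rho}_0$ is split (indeed $\rho_0+\overline{\rho}_0=1$ gives coprimality, so $\nu_{\rho_0}(2)=1$), and $N(\pi_p)=p$ is odd forces $\pi_p\equiv 1\pmod{\rho_0}$; then (1) is immediate, (2) follows from $\pi_p^n+1=\rho_0(u+\overline{\rho}_0)$ with $u\equiv\overline{\rho}_0\equiv 1\pmod{\rho_0}$, (3) from the ultrametric equality when the valuations of $\pi_p^n-1$ and $2$ differ, and (4) exactly as in the paper. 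What your approach buys is self-containedness: it needs none of the facts about $\eta_k$, the points $x_P,x_Q$, or the $A_n/B_n$ dichotomy quoted from \cite{SUk}, only that $\pi_p$ has norm $p$ and that $2$ splits in $\Q(\sqrt{-7})$ (class number $1$, so $\rho_0$ is genuinely prime and $\nu_{\rho_0}$ is a valuation --- a point you rightly flag and should keep, since multiplicativity of $\nu_{\rho_0}$ is used in (4)). What the paper's route buys is uniformity with the rest of the exposition, where the same curve-theoretic objects are in any case needed to interpret $e_0$, $e_1$ as tree depths. One small inaccuracy in your closing remark: the paper's proof of Lemma \ref{seq_3}(2)--(3) is a parity computation with norms $a^2+b^2$ in $\Z[i]$, not an ultrametric manipulation, so your argument is not a transcription of it but rather a cleaner valuation-theoretic analogue (which, incidentally, would also work in the ramified case of Lemma \ref{seq_3} with $\nu_{\rho_0}(2)=2$). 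This does not affect the correctness of your proof.
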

\begin{proof}
Let
\begin{eqnarray*}
S = R / \rho_0^{e_0} R \times R / \rho_1^{e_1} R & \cong & R / (\pi_p^n-1) R,\\
\tilde{S} = R / \rho_0^{\tilde{e}_0} R \times R / \tilde{\rho}_1^{\tilde{e}_1} R & \cong & R / (\pi_p^n+1) R,
\end{eqnarray*}
for some elements $\rho_1$ and $\tilde{\rho}_1$ in $R$ coprime to $\rho_0$ and some nonnegative integers $e_0, e_1, \tilde{e}_0$ and $\tilde{e}_1$. Moreover, define $x_P = \overline{\sigma}+3$ and $x_Q = 2 \sigma -1$. Then, denote by $y_Q$ and $-y_Q$ the $y$-coordinates of the two rational points  $E(\F_{p^{2n}})$ having $x_Q$ as $x$-coordinate. We remind the reader that, according to \cite[Lemma 4.3]{SUk},
\begin{eqnarray*}
\eta_k(x_Q) & = & x_P;\\
\eta_k(x_P) & = & \infty.
\end{eqnarray*}

Consider the points $\mathcal{O}, P$ in $S$ and the points $\tilde{\mathcal{O}}, \tilde{P}$ in $\tilde{S}$ defined as follows:
\begin{eqnarray*}
\mathcal{O} & = & (0,0) = \psi_n (O);\\
\tilde{\mathcal{O}} & = & (0,0) = \widetilde{\psi}_n (O);\\
P & = & (P_0, P_1) = \psi_n (x_P, 0);\\
\tilde{P} & = & (\tilde{P}_0, \tilde{P}_1) = \widetilde{\psi}_n (x_P, 0).
\end{eqnarray*}
\begin{enumerate}[(1)]
\item Since $\eta_k(x_P) = \infty$,  we have that  $[\rho_0] P = ([\rho_0] P_0, [\rho_0] P_1) = \mathcal{O}$. Indeed, $P_1 = 0$, because $\rho_0$ and $\rho_1$ are coprime. Moreover, $P \not = \mathcal{O}$ and consequently $P_0 \not = 0$.  Therefore, $e_0 = \nu_{\rho_0} (\pi_p^n-1) \geq 1$. 
\item Since $x_Q \in \F_{p^n}$, either $x_Q$ belongs to $A_n$ or $x_Q$ belongs to $B_n$. 

Suppose that $x_Q \in A_n$ and define $Q  = (Q_0, Q_1) = \psi_n (x_Q, y_Q) $. Then, $[\rho_0] Q = P$ and $[\rho_0]^2 Q = \mathcal{O}$. In particular, $Q_1 = 0$, because $\rho_0$ and $\rho_1$ are coprime. Moreover, since $e_0 =1$ by hypothesis, $[\rho_0] Q_0 = 0$ and consequently $P_0=0$. This latter is absurd and we deduce that $x_Q \in B_n$. 

Define now $\tilde{Q}  = (\tilde{Q}_0, \tilde{Q}_1) = \widetilde{\psi}_n (x_Q, y_Q)$. Since $[\rho_0] \tilde{Q} = \tilde{P} \not = \mathcal{\tilde{O}}$, we conclude that $\tilde{e}_0 = \nu_{\rho_0} (\pi_p^n+1) \geq 2$.

\item Since $\tilde{P} \in \tilde{S}$ and $\tilde{P} \not = \tilde{\mathcal{O}}$, we deduce that $\nu_{\rho_0} (\pi_p^n+1) \geq 1$. Indeed, $\nu_{\rho_0} (\pi_p^n+1) = 1$. Suppose, on the converse, that $\nu_{\rho_0} (\pi_p^n+1) \geq 2$. Consider the point $\tilde{R} = ([\rho_0]^{\tilde{e}_0-2}, 0) \in \tilde{S}$. Since $[\rho_0]^2 \tilde{R} = \tilde{\mathcal{O}}$, we have that $[\rho_0] \tilde{R} = \tilde{P}$. In particular, $\tilde{R} \in \left\{\widetilde{\psi}_n (x_Q, y_Q), \widetilde{\psi}_n (x_Q, - y_Q) \right\}$. As a consequence, $x_Q \in B_n$. Since by hypothesis $e_0 \geq 2$, any tree rooted in an element of $A_n$ has depth at least $2$. Therefore, $x_Q \in A_n$, because it belongs to the level $2$ of the tree of $G^{p^n}_{\eta_k}$ rooted in $\infty$. All considered, we get a contradiction due to the fact that $A_n \cap B_n = \emptyset$ by definition.

\item From (1), (2) and (3) we deduce that
\begin{equation*}
\nu_0 (\pi_p^{2^i n}-1) = \nu_0 (\pi_p^{2^{i-1} n}-1) + \nu_0 (\pi_p^{2^{i-1} n}+1) \geq 3.
\end{equation*}
Finally, according to (3),
\begin{equation*}
\nu_0 (\pi_p^{2^{i+1} n}-1) = \nu_0 (\pi_p^{2^{i} n}-1) + \nu_0 (\pi_p^{2^{i} n}+1) = \nu_0 (\pi_p^{2^{i} n}-1) +1.
\end{equation*}
\end{enumerate}
\end{proof}

\subsection{Case $k \in C_3^-$}\label{case_3-}
According to Lemma \ref{k-k}, if $k \in C_3^-$ and $n$ is a positive integer, then the dynamics of $\theta_k$ on $\Pro(\F_{p^n})$ is strictly related to the dynamics of $\theta_{-k}$, a map which belongs to the family of maps investigated in \cite[Section 4]{SUk}. Indeed, an element $\tilde{x} \in \Pro(\F_{p^n})$ is $\theta_k$-periodic if and only if it is $\theta_{-k}$-periodic. In the case that $\tilde{x}$ is not $\theta_k$-periodic, $\tilde{x}$ belongs to a certain level $t$ of some tree both in $G^{p^n}_{\theta_k}$ and in $G^{p^n}_{\theta_{-k}}$.

\section{Constructing  irreducible polynomials via the $Q_k$-transforms}
The following theorem describes how the iterative procedure for constructing irreducible polynomials via the $Q_k$-transforms works, when $k \in C_2 \cup C_3 \cup C_3^-$. 
\begin{theorem}\label{thm_seq_case_2}
Let $f_0 \in \Irr_p(n)$, for some odd prime $p$ and some positive integer $n$, and $k \in C_2 \cup C_3 \cup C_3^-$. 

Define two nonnegative integers $e_0$ and $e_1$ as follows:
\begin{itemize}
\item if $k \in C_2$, then
\begin{eqnarray*}
e_0 & = & \nu_2 (N(\pi_p^n-1)),\\
e_1 & = & \nu_2 (N(\pi_p^n+1)),
\end{eqnarray*}
where $\nu_2, N$ and $\pi_p$ are defined as in Section \ref{case_2};
\item if $k \in C_3 \cup C_3^-$, then
\begin{eqnarray*}
e_0 & = & \nu_{\rho_0} (\pi_p^n-1),\\
e_1 & = & \nu_{\rho_0} (\pi_p^n+1),
\end{eqnarray*}
where $\rho_0, \nu_{\rho_0}$ and $\pi_p$ are defined as in Section \ref{case_3}.
\end{itemize}

If $f_0^{Q_k}$ is irreducible, define $f_1 := f_0^{Q_k}$. Otherwise, as stated in Theorem \ref{seq_4}, factor $f_0^{Q_k}$ into the product of two monic irreducible polynomials $g_1, g_2$ of the same degree $n$, where $g_1$ has a  non-$\theta_k$-periodic root in $\F_{p^n}$. In this latter case we set $f_1:=g_1$.

For $i \geq 2$ define inductively a sequence of polynomials $\{ f_i \}_{i \geq 2}$ in such a way:
\begin{itemize}
\item if $f_{i-1}^{Q_k}$ is irreducible, then set $f_{i} := f_{i-1}^{Q_k}$;
\item if $f_{i-1}^{Q_k}$ is not irreducible, then factor $f_{i-1}^{Q_k}$ into the product of two monic irreducible polynomials $g_1, g_2$ of the same degree and set $f_{i} := g_1$.
\end{itemize} 
Then, there exist a nonnegative integer $s$ and a positive integer $t$ such that:
\begin{itemize}
\item $s \leq \max \{e_0, e_1 \}$;
\item $s+t \leq e_0 + e_1$;
\item $\{f_0, \dots, f_{s} \} \subseteq \Irr_p(n)$;
\item $\{f_{s+1}, \dots, f_{s+t} \} \subseteq \Irr_p(2n)$.
\end{itemize}
Moreover, the following hold:
\begin{itemize}
\item if $k \in C_2$, then $\{ f_{s+t+2j-1}, f_{s+t+2j} \} \subseteq \Irr_p \left( 2^{j+1} \cdot n \right)$ for any $j \geq 1$;
\item if $k \in C_3 \cup C_3^-$, then $f_{s+t+j} \in \Irr_p \left( 2^{j+1} \cdot n \right)$ for any $j \geq 1$.
\end{itemize}
\end{theorem}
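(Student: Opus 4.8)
The plan is to follow a single root of $f_i$ along its backward orbit under $\theta_k$ and to read the whole process off the elliptic-curve / $R$-module picture of \cite{SUk}. For $k \in C_3$ I would work with the map $\eta_k$ instead of $\theta_k$ (the graphs $G^{p^n}_{\theta_k}$ and $G^{p^n}_{\eta_k}$ being isomorphic via $\chi_k$), and for $k \in C_3^-$ I would pass to $-k\in C_3$ through Lemma \ref{k-k}, which matches the $\theta_k$-periodic points and the levels of the reversed trees with those of $\theta_{-k}$. Fix a root $\beta_0$ of $f_0$ in $\F_{p^n}$, a vertex of $G^{p^n}_{\theta_k}$, and define inductively $\beta_i$ to be a $\theta_k$-preimage of $\beta_{i-1}$, chosen non-$\theta_k$-periodic whenever $f_{i-1}^{Q_k}$ is reducible; by Lemma \ref{seq_2} and Theorem \ref{seq_4} this $\beta_i$ is a root of $f_i$. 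Since $k \in \F_p$, the Frobenius commutes with $\theta_k$, so all Galois conjugates of $\beta_i$ occupy the same level of their reversed trees; hence $\deg f_i = [\F_p(\beta_i):\F_p]$ is determined by the location of $\beta_i$ in $G^{p^{\deg f_i}}_{\theta_k}$.

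The engine of the proof is a one-step lemma: if $\beta_i \in \F_{p^m}$ with $m = \deg f_i$ sits at level $\ell$ of a reversed tree of $G^{p^m}_{\theta_k}$ of depth $d$, then either (a) $\ell < d$, both $\theta_k$-preimages of $\beta_i$ lie in $\F_{p^m}$, $f_i^{Q_k}$ is a product of two polynomials of $\Irr_p(m)$, and $\beta_{i+1}$ sits at level $\ell+1$ of $G^{p^m}_{\theta_k}$; or (b) $\ell = d$, both preimages lie in $\F_{p^{2m}}\setminus\F_{p^m}$, so $f_i^{Q_k}\in\Irr_p(2m)$ by Theorem \ref{seq_4}, and $\beta_{i+1}$ sits at level $d+1$ of $G^{p^{2m}}_{\theta_k}$. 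I would prove this by transporting $\beta_i$ through the isomorphisms $\psi_m$, $\widetilde{\psi}_m$ of \cite{SUk}, under which $\theta_k$ (resp.\ $\eta_k$) becomes multiplication by $[\rho_0]$; the depth $d$ equals $\nu_2(N(\pi_p^m\mp 1))$ resp.\ $\nu_{\rho_0}(\pi_p^m\mp 1)$ by \cite[Theorem 3.5]{SUk} resp.\ \cite[Theorem 4.6]{SUk}, the level of $\beta_i$ is $d$ minus the $\rho_0$-adic valuation of the corresponding module element, and the preimages of $\beta_i$ are $\F_{p^m}$-rational precisely when that element is divisible by $\rho_0$, i.e.\ precisely when $\ell<d$. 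The case $\ell=0$ needs a word apart: the two preimages of a $\theta_k$-periodic $\beta_i$ are its cycle-predecessor (periodic) and the inverse of the latter (not periodic, since for $k\in C_2\cup C_3\cup C_3^-$ neither $1$ nor $-1$ is $\theta_k$-periodic), so the algorithm keeps the level-$1$ preimage and $f_i^{Q_k}$ splits, the ascent still gaining one level. I would also verify that the ramification points $\pm 1$ and the exceptional set $E(\F_p)^*_x$ lie near the root of their tree and never obstruct the level-by-level ascent.

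Granting the one-step lemma, the stated phases follow by arithmetic bookkeeping. Let $d_0\in\{e_0,e_1\}$ be the depth of the tree of $G^{p^n}_{\theta_k}$ containing $\beta_0$ and $\ell_0\le d_0$ its level. Applying (a) exactly $s:=d_0-\ell_0$ times keeps the degree equal to $n$, so $\{f_0,\dots,f_s\}\subseteq\Irr_p(n)$ with $s\le d_0\le\max\{e_0,e_1\}$; one application of (b) then gives $f_{s+1}\in\Irr_p(2n)$ with $\beta_{s+1}$ at level $d_0+1$ of $G^{p^{2n}}_{\theta_k}$. Since the curve point above any point of $\F_{p^n}$ is $\F_{p^{2n}}$-rational, the component now carrying $\beta_{s+1}$ has depth $\nu_2(N(\pi_p^{2n}-1))$, which by $\pi_p^{2n}-1=(\pi_p^n-1)(\pi_p^n+1)$ equals $\nu_2(N(\pi_p^n-1))+\nu_2(N(\pi_p^n+1))=e_0+e_1$ (and $\nu_{\rho_0}(\pi_p^{2n}-1)=e_0+e_1$ likewise in the $C_3\cup C_3^-$ case); so $t:=e_0+e_1-d_0$ further applications of (a) keep the degree $2n$, giving $\{f_{s+1},\dots,f_{s+t}\}\subseteq\Irr_p(2n)$ with $s+t=e_0+e_1-\ell_0\le e_0+e_1$ and $t\ge1$, the latter because parts (1)--(3) of Lemma \ref{seq_3} (resp.\ Lemma \ref{tech_lemma_case_3}) force $\min\{e_0,e_1\}\ge1$, hence $e_0+e_1>\max\{e_0,e_1\}\ge d_0$. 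Finally I would induct on $j\ge1$: after the doubling at degree $2^{j} n$ the root lands one level above the previous maximal level $\nu_2(N(\pi_p^{2^{j} n}-1))$ (resp.\ $\nu_{\rho_0}(\pi_p^{2^{j} n}-1)$), while by Lemma \ref{seq_3}(4) (resp.\ Lemma \ref{tech_lemma_case_3}(4)) the maximal level at degree $2^{j+1} n$ is larger by exactly $2$ (resp.\ by exactly $1$); so exactly two polynomials $f_{s+t+2j-1},f_{s+t+2j}$ of degree $2^{j+1} n$ remain before the next doubling when $k\in C_2$, and exactly one, $f_{s+t+j}$, when $k\in C_3\cup C_3^-$.

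The hard part will be the one-step lemma, and inside it two points: extracting from the isomorphisms of \cite{SUk} the clean equivalence ``$\theta_k$-preimages of $\beta_i$ are $\F_{p^m}$-rational $\iff$ $\beta_i$ is below the maximal level'' uniformly, ramification and exceptional points included; and the bookkeeping of which of the two component types (the ``$A$'' and ``$B$'' types of Sections \ref{case_2} and \ref{case_3}) a component takes on after the ground field is doubled, since this is what decides whether the tree depth is read off from $\pi_p^m-1$ or from $\pi_p^m+1$, and hence whether Lemma \ref{seq_3}(4) / Lemma \ref{tech_lemma_case_3}(4) applies in the form used above. After that, it reduces to the elementary $\nu_2$- and $\nu_{\rho_0}$-arithmetic of Lemmas \ref{seq_3} and \ref{tech_lemma_case_3}.
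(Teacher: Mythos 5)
Your proposal is correct and follows essentially the same route as the paper: you track a chain of roots $\beta_i$ with $\theta_k(\beta_{i+1})=\beta_i$ up the reversed trees of $G^{p^n}_{\theta_k}$ and $G^{p^{2n}}_{\theta_k}$, read the tree depths off the valuations $\nu_2(N(\pi_p^m\mp1))$ resp.\ $\nu_{\rho_0}(\pi_p^m\mp1)$ from \cite{SUk}, and settle the eventual doubling pattern with Lemma \ref{seq_3}(4) and Lemma \ref{tech_lemma_case_3}(4). The only difference is one of presentation: your ``one-step lemma'' spells out rationality of preimages level by level, details the paper leaves to the analogous argument in \cite[Theorem 3.1]{SUwpc}.
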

\begin{proof}
The proof of the present theorem follows the same lines of the proof of \cite[Theorem 3.1]{SUwpc}.

First, we denote by $\beta_0 \in \F_{p^n}$ a root of $f_0$. Then, we construct inductively a sequence $\{ \beta_i \}_{i \geq 0}$ of elements belonging to $\F_{p^n}$ or to appropriate extensions of $\F_{p^n}$ such that, for any $i \geq 0$, the following hold:
\begin{itemize}
\item $f_{i} (\beta_i) = 0$;
\item $\theta_k (\beta_{i+1}) = \beta_i$.
\end{itemize}

We notice that, being the roots of $f_1$ not $\theta_k$-periodic, $\beta_0$ is a vertex of some tree in $G_{\theta_k}^{p^n}$. Since the depth of a tree in $G_{\theta_k}^{p^n}$ is either equal to $e_0$ or to $e_1$, we conclude that there exists a non-negative integer $s \leq \max \{e_0, e_1 \}$ such that $\beta_s$ has degree $n$ over $\F_p$, while $\beta_{s+1}$ has degree $2n$ over $\F_p$, implying that $\{f_0, \dots, f_s \} \subseteq \Irr_p(n)$, while $f_{s+1} \in \Irr_p(2n)$.

We notice that $\beta_0$ is the root of a tree in $G_{\theta_k}^{p^{2n}}$ which has depth $e_0 + e_1$. Therefore, there exists a positive integer $t$, with $s+t \leq e_0 + e_1$, such that $\beta_{s+t}$ has degree $2n$ over $\F_p$, while $\beta_{s+t+1}$ has degree $4n$ over $\F_p$, implying that $\{f_{s+1}, \dots, f_{s+t} \} \subseteq \Irr_p(2n)$.

The last two statements regarding the polynomials $f_i$, for $i > s+t$, follow respectively from Lemma \ref{seq_3} and Lemma \ref{tech_lemma_case_3}.
\end{proof}

\begin{remark}
One of the hypotheses of Theorem \ref{thm_seq_case_2} is that the polynomial $f_1$ has no $\theta_k$-periodic roots. While this is true if $f_0^{Q_k} \in \Irr_p (2n)$, the same does not always hold if $f_0^{Q_k} (x) = g_1(x) \cdot g_2(x)$, for some monic irreducible polynomials $g_1, g_2$ of equal degree $n$. More precisely, at least one of $g_1$ and $g_2$ has no $\theta_k$-periodic roots. If one of them, say $g_1$, has $\theta_k$-periodic roots and we set $f:=g_1$, it can happen that $f_{\tilde{e}} \in \Irr_p (n)$, where $\tilde{e} = \max \{ e_0, e_1 \}$. If this is the case, then we break the iterative procedure and set $f_1 := g_2$. Doing that, the hypotheses of the theorem are satisfied and we can proceed with the iterative construction.  
\end{remark}

\begin{example}
Consider the prime $p=53$. We notice that $p \equiv 1 \pmod{4}$ and $p \equiv 4 \pmod{7}$. 

First, we fix a root $k$ of $x^2 + \frac{1}{4}$ in $\F_p$, namely $k = 15$, and construct a sequence of monic irreducible polynomials from the polynomial $f_0 (x) = x^5+3x+51 \in \Irr_{53} (5)$ via the transform $Q_{15}$. Proceeding as explained in Theorem \ref{thm_seq_case_2}, using a computational tool as \cite{GAP}, we get that $f_1, f_2$ and $f_3$ belong to $\Irr_{53} (10)$, while $f_4 \in \Irr_{53} (20)$. Therefore, in accordance with the notations and the claims of Theorem \ref{thm_seq_case_2}, in this example $s=1$, $t=2$ and $\{f_{3+2j-1}, f_{3+2j} \} \subseteq \Irr_{53} (2^{j+1} \cdot 5)$ for any $j \geq 1$.

We notice in passing that, while $f_{3+2j-1} = f_{3+2j-2}^{Q_{15}}$ for any $j \geq 1$, any polynomial $f_{3+2j}$ is equal to one of the two irreducible factors of $f_{3+2j-1}^{Q_{15}}$. This latter is equivalent to saying that every two steps in our construction the factorization of a polynomial is required. While efficient algorithms for the factorization of a polynomial into two equal-degree polynomials are known, one can reduce this burden taking $k \in C_3 \cup C_3^-$.

Consider now $k = 7 \in C_3$ and $f_0$ as before. Constructing a sequence of monic irreducible polynomials via the transform $Q_{7}$ starting from $f_0$ we get that $f_1 \in \Irr_{53} (10)$, while $f_2 \in \Irr_{53} (20)$. Therefore, in accordance with Theorem \ref{thm_seq_case_2}, in this example $s=0$, $t=1$ and $f_{1+j} \in \Irr_{53} (2^{j+1} \cdot 5)$ for any $j \geq 1$. In particular, $f_{j+1} = f_j^{Q_{7}}$ for $j \geq 1$ and no factorization is required. 
\end{example}

\bibliography{Refs}
\end{document}